\newcommand{\NecCOND}{\frac{27  }{40}\eta ''(t) -\frac{5103 }{1600}\eta ^2(t)+\frac{27}{16}\xi(t)}
\begin{document}
\numberwithin{equation}{section}
\newcounter{thmcounter}
\newcounter{Remarkcounter}
\newcounter{Defcounter}
\numberwithin{thmcounter}{section}
\newtheorem{Prop}[thmcounter]{Proposition}
\newtheorem{Corol}[thmcounter]{Corollary}
\newtheorem{theorem}[thmcounter]{Theorem}
\newtheorem{Lemma}[thmcounter]{Lemma}
\theoremstyle{definition}
\newtheorem{Def}[Defcounter]{Definition}
\theoremstyle{remark}
\newtheorem{Remark}[Remarkcounter]{Remark}
\newtheorem{Example}[Remarkcounter]{Example}

\title{Classification of minimal Lorentzian surfaces in $\mathbb  S^4_2(1)$  with constant Gaussian and normal curvatures}

\author{U\u gur Dursun and Nurettin Cenk Turgay}
\date{\today}

\maketitle
\begin{abstract}
In this paper we consider Lorentzian surfaces in the 4-dimensional pseudo-Riemannian sphere   
$\mathbb S^4_2(1)$ with index 2 of curvature one. 
We obtain  the complete classification of minimal Lorentzian surfaces $\mathbb S^4_2(1)$ 
whose Gaussian and normal curvatures are constants. 
We conclude that such surfaces have the Gaussian curvature $1/3$  and the absolute value of normal curvature $2/3$.
We also give some explicit examples.

\textbf{Keywords.} Gaussian curvature, minimal submanifolds, Lorentzian surfaces, normal curvature

\textbf{Mathematics Subject Classification (2010).}  53B25 (Primary); 53C50 (Secondary)

\end{abstract}

\section{Introduction}\label{SectionIntrod}

Surfaces with zero mean curvature play an important role on several branches of physics, 
mathematics as well as differential geometry. Classifications of minimal  surfaces with constant  Gaussian  curvature in 
Riemannian spaces of constant curvature  have been  studied in a number of papers, 
\cite{Chen1972, Kenmotsu1983, Lumiste1961, Pinl1958}.
Also, a similar classification was considered  for surfaces in pseudo-Riemannian spaces 
of constant curvature in \cite{Chen2010,Cheng2000,gorokh2,gorokh,SAKAKI}. 

One of the first important results in this direction was obtained by Pinl in 
\cite{Pinl1958}, where he proved that there is no minimal surface with non-zero 
constant Gaussian curvature in a Euclidean space $\mathbb E^n$ of arbitrary dimension. 
Later, in \cite{gorokh2} it was proved that this statement is still true if the ambient space is a Minkowski space $\mathbb E^n_1$ of arbitrary dimension. 

On the other hand, if the ambient space is a (pseudo)-Riemannian space form with constant sectional curvatures $K_0\neq0$,  then  different results may occur in terms of existence of minimal surfaces with constant Gaussian curvature $K\neq K_0$. The Veronese surface and the Clifford torus in $\mathbb S^4 (1)$   and the pseudo-Riemannian Clifford torus in the de Sitter space $\mathbb S^4_1 (c),\ c>0$ are  
some of the most basic examples of minimal surfaces with constant Gaussian curvature. 
In \cite{gorokh}, it was proved that a minimal surface with constant Gaussian curvature in  $\mathbb S^4_1 (c)$ is congruent 
to an open part of either a Clifford torus or a pseudo-Riemannian Clifford torus.

Further, in \cite{SAKAKI}, Sakaki gave necessary and sufficient conditions for the existence of 
space-like maximal surfaces in 4-dimensional pseudo-Riemannian space forms $\mathbb S^4_2(1)$ and 
$\mathbb H^4_2(-1)$  with index 2, and  he also obtained a characterization for maximal surfaces with constant Gaussian curvature in these space forms.
In \cite{Cheng2000}, Cheng gave a classification of complete maximal surfaces with constant scalar 
curvature in 4-dimensional pseudo-hyperbolic space $\mathbb H^4_2 (c)$ with index 2 and  of  constant curvature $c<0$. 

In a recent paper,  Chen obtained several classifications of minimal Lorentzian surfaces in arbitrary indefinite space forms, \cite{CHEN2011_SpaceForms}. In particular,  he obtained all minimal Lorentzian surfaces of constant curvature one in the pseudo Riemannian sphere $\mathbb S^n_t(1)$ of arbitrary dimension and index. In \cite{CHEN2011_SpaceForms}, he also proved that a minimal surface in a pseudo-Euclidean space $\mathbb E^n_t$ is congruent to a translation surface of two null curves. On the other hand, in \cite{ChenFlatQuasiE42HASADDENDUM} and \cite{ChenFlatQuasiE42ADDENDUM}, Chen and Yang gave the complete  classification of  flat quasi-minimal surfaces in the pseudo-Euclidean space $\mathbb E^4_2$.

Before we proceed, we want to point out to the minimal immersion from $\mathbb S^2(\frac 13)$ into $\mathbb S^4(1)$ given by 
$$\left(\frac{vw}{\sqrt 3},\frac{uw}{\sqrt 3},\frac{uv}{\sqrt 3},\frac{u^2-v^2}{\sqrt 3},\frac{u^2+v^2-2w^2}6\right),\quad u^2+v^2+w^2=3,$$
 called the Veronese surface which has the following interesting property. It is well-known that a minimal parallel surface lying fully in $\mathbb S^4(1)$ is an open part of this surface, \cite{Dillen01,Kenmotsu01}. 
The analogous of this result in the 4-dimensional pseudo-hyperbolic space $\mathbb H^4_2(-1)$   was obtained by  Chen in \cite{Chen2010}. 
He gave a minimal immersion of the hyperbolic plane $\mathbb H^2(-\frac 13)$ of curvature $-1/3$ 
into $\mathbb H^4_2(-1)$ and he proved that,  up to rigid motion of $\mathbb H^4_2 (-1)$,
 this surface is the only parallel minimal surface lying fully in $\mathbb H^4_2 (-1)$.
Note that  there is an immersion with zero mean curvature vector field from the de Sitter 2-space 
$\mathbb S^2_1(\frac 13)$ of curvature $1/3$ into the pseudo-sphere $\mathbb S^4_2(1)$ with index 2 which is called the Lorentzian Veronese surface (see Example \ref{veronesesurf}).

In this work, we study minimal Lorentzian  surfaces in the 4-dimensional  pseudo-sphere $\mathbb S^4_2(1)$. 
We obtain a characterization for minimal Lorentzian surfaces in $\mathbb S^4_2(1)$ 
with constant Gaussian curvature    and   constant normal curvature. 
We conclude that for such surfaces the Gaussian curvature  is $1/3$ and the absolute value of the normal curvature is $2/3.$
Also we obtain a characterization for minimal Lorentzian surfaces in $\mathbb S^4_2(1)$ that is congruent to the Lorentzian Veronese surface. Finally we give   some explicit examples.

\section{Prelimineries}

Let $M$ be a non-degenerated $k$-dimensional pseudo-Riemannian submanifold  of an 
$n$-dimensional pseudo-Riemannian manifold $N$. We denote the Levi-Civita connections of $N$ and $M$ by $\widetilde{\nabla}$ and $\nabla$,  
respectively. The Gauss and Weingarten formulas are given, respectively, by
\begin{eqnarray}
\label{MEtomGauss} \widetilde\nabla_X Y&=& \nabla_X Y + h(X,Y),\\
\label{MEtomWeingarten} \widetilde\nabla_X \xi&=& -A_\xi(X)+D_X \xi,
\end{eqnarray}
 for any tangent vector field $X,\ Y$ and any normal vector field $\xi$ on $M$, where $h$ and  $D$  are the second fundamental form and the normal connection of $M$ in $N$, respectively, and  $A_\xi$ stands for the shape operator along the normal direction $\xi$.   It is well-known that the shape operator $A$ and the second fundamental form $h$ of $M$ are related by
\begin{eqnarray}
\label{MinkAhhRelatedby} \langle A_\xi X,Y\rangle&=&\langle h(X,Y),\xi\rangle.
\end{eqnarray}

The mean curvature vector field of $M$ in  $N$  is defined by
\begin{equation}
\label{MeanCurvVectFirstDef} H=\frac 1k\mathrm{tr}h.
\end{equation}
A submanifold $M$ in $N$ is called minimal if $H$ vanishes identically.
In particular, if  $M$ is a surface in $N$, i.e., $k=2$,  the Gaussian curvature $K$ of $M$ is defined by 
\begin{eqnarray}
\label{GaussianCurvature}K&=& \frac{R(X,Y,Y,X)}{\langle X,X\rangle\langle Y,Y\rangle-\langle X,Y\rangle^2},
\end{eqnarray}
 where $X,Y$ span the tangent bundle of $M$. A surface  $M$ is said to be flat if $K\equiv0$ on $M$.

Let $\mathbb E^n_t$ denote the pseudo-Euclidean $n$-space with the canonical 
pseudo-Euclidean metric tensor of index $t$ given by  
$$
 g=-\sum\limits_{i=1}^t dx_i^2+\sum\limits_{j=t+1}^n dx_j^2,
$$
where $(x_1, x_2, \hdots, x_n)$  is a rectangular coordinate system of $\mathbb E^n_t$.

A non-zero vector $v$ in $\mathbb E^n_t$  is called space-like, time-like or null (light-like) if $\langle v, v \rangle>0$,  
$\langle  v, v \rangle<0$ or $\langle v, v \rangle=0$, respectively. 

We put 
\begin{eqnarray} 
\mathbb S^{n-1}_t(r^2)&=&\{v\in\mathbb E^n_t: \langle v, v \rangle=r^{-2}\},\notag
\\  
\mathbb H^{n-1}_{t}(-r^2)&=&\{v\in\mathbb E^n_{t-1}: \langle v, v \rangle=-r^{-2}\},\notag
\end{eqnarray}
where $\langle\ ,\ \rangle$ is the indefinite inner product on $\mathbb E^n_t$, \cite{ONeillKitap}. 
Here $\mathbb S^{n-1}_t (r^2)$ and $\mathbb H^{n-1}_t (-r^2)$ are complete pseudo-Riemannian manifolds of index 
$t$ and of constant curvature $r^2$ and $-r^2$, respectively.

Furthermore, the light cone $\mathcal {LC}$ of $\mathbb E^n_t$ is defined by 
$$\mathcal {LC}=\{v\in\mathbb E^n_t: \langle v, v \rangle=0\}.$$

In the rest of the paper, we put $N=\mathbb E^n_t$. Then, Gauss, Codazzi and Ricci equations become
\begin{subequations}
\begin{eqnarray}
\label{MinkGaussEquation} R(X,Y,Z,W)&=&\langle h(Y,Z),h(X,W)\rangle-
\langle h(X,Z),h(Y,W)\rangle,\\
\label{MinkCodazzi} (\hat \nabla_X h )(Y,Z)&=&(\hat \nabla_Y h )(X,Z),\\
\label{MinkRicciEquation} \langle R^D(X,Y)\xi,\eta\rangle&=&\langle[A_\xi,A_\eta]X,Y\rangle,
\end{eqnarray}
\end{subequations}
respectively, where  $R,\; R^D$ are the curvature tensors associated with the connections $\nabla$ 
and $D$, respectively, and 
$$(\hat \nabla_X h)(Y,Z)=D_X h(Y,Z)-h(\nabla_X Y,Z)-h(Y,\nabla_X Z).$$


\section{Minimal Lorentzian surfaces with constant Gaussian and normal curvatures}
In this section we obtain complete classification of minimal Lorentzian surfaces in pseudo-sphere $\mathbb S^4_2(1)$ with constant Gaussian and normal curvatures.

First, we would like to state the following lemma obtained in \cite{ChenDep2009} (see also \cite[Proposition 2.1]{JiHou2007} and \cite{HouYang2010}).
\begin{Lemma}\cite{ChenDep2009}\label{EMsLorSurfgf1f2}
Locally there exists a coordinate system $(u,v)$ on a Lorentzian surface $M$ such that the metric tensor is given by
$$g=-m^2(du\otimes dv+dv\otimes du),$$ 
for some positive smooth function  $m=m(u,v)$.  Moreover, the Levi-Civita connection of  $M$ is given by
\begin{equation}
\label{EMsLorSurfgf1f2Levi} \nabla_{\partial_u}\partial_u= \frac{2m_u}{m}\partial_u,\quad\nabla_{\partial_u}\partial_v= 0,\quad\nabla_{\partial_v}\partial_v= \frac{2m_v}{m}\partial_v
\end{equation}
and the Gaussian curvature of $M$ becomes
\begin{equation}\label{EMsLorSurfgf1f2Gaussian}
K=\frac{2(mm_{st}-m_{s}m_t)}{m^4}.
\end{equation}
\end{Lemma}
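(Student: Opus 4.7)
The crucial observation is that on any Lorentzian $2$-manifold the null cone at each point consists of exactly two distinct lines through the origin of the tangent space. These determine two smooth locally defined rank-$1$ null distributions $L_1,L_2\subset TM$ that are everywhere transverse. Since every $1$-dimensional distribution is trivially integrable, I obtain two transverse foliations of $M$ by null curves, which will supply the desired coordinates.

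First, I pick local first integrals: a function $u$ constant along the leaves of $L_2$ and a function $v$ constant along the leaves of $L_1$. By the transversality of $L_1$ and $L_2$, the differentials $du$ and $dv$ are linearly independent, so $(u,v)$ is a valid chart. Then $\partial_u\in\ker dv=L_1$ and $\partial_v\in\ker du=L_2$, so $g_{uu}=g_{vv}=0$. The entry $g_{uv}$ is nowhere zero since $g$ is non-degenerate, so $g=F(u,v)\,(du\otimes dv+dv\otimes du)$ for a nowhere-vanishing smooth $F$. After replacing $u$ by $-u$ if necessary, I may assume $F<0$, and then write $F=-m^{2}$ with $m>0$.

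Next, I read off the Christoffel symbols from the inverse metric $g^{uv}=g^{vu}=-1/m^{2}$, $g^{uu}=g^{vv}=0$. The only derivatives of $g_{ij}$ that contribute are $\partial_u g_{uv}=-2mm_u$ and $\partial_v g_{uv}=-2mm_v$. Plugging into $\Gamma^k_{ij}=\tfrac12 g^{k\ell}(\partial_i g_{j\ell}+\partial_j g_{i\ell}-\partial_\ell g_{ij})$, all Christoffel symbols vanish except $\Gamma^u_{uu}=2m_u/m$ and $\Gamma^v_{vv}=2m_v/m$, which yields \eqref{EMsLorSurfgf1f2Levi}.

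Finally, for the Gaussian curvature I apply \eqref{GaussianCurvature} with $X=\partial_u,\ Y=\partial_v$. The denominator equals $-m^{4}$. Using $[\partial_u,\partial_v]=0$, $\nabla_{\partial_v}\partial_u=0$, and the connection formulas just established,
\begin{equation*}
R(\partial_u,\partial_v)\partial_v=\nabla_{\partial_u}\!\left(\tfrac{2m_v}{m}\partial_v\right)-\nabla_{\partial_v}(0)=\frac{2(mm_{uv}-m_um_v)}{m^{2}}\,\partial_v,
\end{equation*}
so pairing with $\partial_u$ gives a numerator of $-2(mm_{uv}-m_um_v)$, and \eqref{EMsLorSurfgf1f2Gaussian} follows after dividing by $-m^{4}$. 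The only non-routine step is the initial construction of null coordinates; once that is in place, the connection and curvature formulas reduce to direct Christoffel-symbol computations.
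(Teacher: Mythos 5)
Your proof is correct. Note, however, that the paper does not prove this lemma at all --- it is quoted from the cited reference \cite{ChenDep2009} --- so there is no internal argument to compare against; your derivation (two transverse null line fields $\Rightarrow$ null coordinates, then a direct Christoffel-symbol and curvature computation) is the standard one and checks out, including the signs: the denominator in \eqref{GaussianCurvature} is $-m^4$, the numerator $R(\partial_u,\partial_v,\partial_v,\partial_u)=-2(mm_{uv}-m_um_v)$, and the quotient gives \eqref{EMsLorSurfgf1f2Gaussian} (where the subscripts $s,t$ in the paper's formula should be read as $u,v$).
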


Let $M$ be a Lorentzian surface in the pseudo-Riemannian space form $\mathbb S^4_2(1)$. We consider a local pseudo-orthonormal frame field  $\{f_1,f_2;f_3,f_4\}$ of $M$ such that $\langle f_1,f_2 \rangle=\langle f_3,f_4 \rangle=-1$ and $\langle f_A,f_B \rangle=0$ for other cases. Then, by using \eqref{MeanCurvVectFirstDef} one can see that the mean curvature vector $\hat H$ in $\mathbb S^4_2(1)$ becomes
\begin{eqnarray}
\label{LorSurfH}\hat H&=& -\hat h(f_1,f_2),
\end{eqnarray}
where $\hat h$   denote the second fundamental form  of $M$ in $S^4_2(1)$.
On the other hand,  the normal curvature $K^{\hat D}$ of $M$ in $\mathbb S^4_2(1)$ is defined by
\begin{eqnarray}
\label{NormalCurvature}K^{\hat D}&=& -R^{\hat D}(f_1,f_2;f_3,f_4),
\end{eqnarray}
where  $\hat D$  denote the  normal curvature of $M$ in $S^4_2(1)$. In the rest of the paper, 
by the abuse of notation, we put $K^D = K^{\hat D}$. Let $x$ denote the position vector of $M$ in $\mathbb E^5_2$.  
We will denote connection forms of $M$ associated with the frame field under consideration by $\omega_A^B,\ A,B=1,2,3,4$ which are defined by 
$$
\widetilde\nabla_Xf_A=\sum\limits_{B =1}^4 \omega_A^B(X)f_B - \left\langle X, f_A \right\rangle x
$$
for a vector field $X$ tangent to $M$. By considering \eqref{MinkAhhRelatedby}, one can check that connection forms satisfy
\begin{align}
\begin{split}
 \omega_1^3=-\omega_4^2,\quad\omega_2^3=-\omega_4^1,\quad& \omega_1^4=-\omega_3^2,\quad\omega_2^4=-\omega_3^1\\ 
\omega_1^1=-\omega_2^2,\quad\omega_3^3=-\omega_4^4,\quad&\omega_1^2=\omega_2^1=\omega_3^4=\omega_4^3=0.
\end{split}
\end{align}
\begin{Remark}
By considering the local orthonormal frame field  $\{e_1,e_2;e_3,e_4\}$ given by $e_1=(f_1-f_2)/\sqrt 2$, $e_2=(f_1+f_2)/\sqrt 2$, $e_3=(f_3+f_4)/\sqrt 2$ and $e_4=(f_3-f_4)/\sqrt 2$, one can see that  \eqref{NormalCurvature} becomes
\begin{eqnarray}
\label{NormalCurvature2}K^{D}&=& R^{\hat D}(e_1,e_2;e_3,e_4).
\end{eqnarray}
\end{Remark}

\subsection{Connection forms of minimal Lorentzian surfaces}\label{Ssect:ConnnFORMs}
In this subsection, we would like to focus on minimal Lorentzian surfaces and consider their connection forms.

Let $M$ be a Lorentzian surface in $\mathbb S^4_2(1) \subset \mathbb E^5_2$
  with the Gaussian curvature $K$ and the normal curvature $K^D$, 
	and let $x$ be its position vector in $\mathbb E^5_2$.
Then, by employing  in Lemma \ref{EMsLorSurfgf1f2}, we see that tangent vector fields 
$f_1=m^{-1}\partial_u$ and $f_2=m^{-1}\partial_v$ form a local pseudo-orthonormal frame 
field for the tangent bundle of $M$.  Because of \eqref{EMsLorSurfgf1f2Levi}, we have  
\begin{eqnarray}
\label{MinimalLeviCivita1a}\nabla_{f_i}f_1=\phi_if_1, &\quad& \nabla_{f_i}f_2=-\phi_if_2,
\end{eqnarray}
where we put 
\begin{eqnarray}
\label{MinimalLeviCivita1b}\phi_1=\omega_1^1(f_1)=\frac{m_u}{m^2} &\quad& \phi_2=\omega_1^1(f_2)=-\frac{m_v}{m^2}.
\end{eqnarray}

On the other hand, since $M$ is a Lorentzian surface, its normal bundle in $\mathbb S^4_2(1)$ 
is spanned by two null vector fields $f_3, f_4$ such that $\langle f_3,f_4\rangle=-1$. Also, we put $f_5=x$.

Now, we assume that $M$ is minimal in $\mathbb S^4_2(1)$. Then, \eqref{LorSurfH} implies 
$\hat H=-\hat h(f_1,f_2)=0$, where $\hat H$   denote the mean curvature vector   of $M$ in $\mathbb S^4_2(1)$. 
On the other hand, since $\widetilde\nabla_{f_i}x=f_i$ we have $Df_5=0$ and $A_{5}=-I$, 
where $A_\mu$ denotes the shape opearator along the normal vector field $f_\mu$, $\mu=3,4,5$. Thus, we have  
\begin{eqnarray}
\label{MinimalNormalConn1a}D_{f_i}f_3=\psi_if_3, &\quad& \nabla_{f_i}f_4=-\psi_if_4,
\end{eqnarray}
where we put  $\psi_i=\omega_3^3(f_i),\ i=1,2$.

Therefore, by using \eqref{MinkAhhRelatedby}, we obtain $\langle h(f_i,f_i),f_5\rangle=0$ and $\langle h(f_1,f_2),f_5\rangle=1$. Hence, we have
\begin{subequations}\label{Minimal2ndFundForm1ALL}
\begin{eqnarray}
\label{Minimal2ndFundForm1a}h(f_1,f_1)&=& -h^4_{11}f_3-h^3_{11}f_4,\\
\label{Minimal2ndFundForm1b}h(f_1,f_2)&=& f_5,\\
\label{Minimal2ndFundForm1c}h(f_2,f_2)&=& -h^4_{22}f_3-h^3_{22}f_4,
\end{eqnarray}
\end{subequations}
where $h^\mu_{ij}=\langle h(f_i,f_j),f_\mu \rangle,\ i,j=1,2,\ \mu=3,4$. In this case, \eqref{MinkAhhRelatedby} implies
\begin{subequations}\label{MinimalShapeOps1ALL}
\begin{eqnarray}
\label{MinimalShapeOps1a}A_3(f_1)=-h^3_{11}f_2, &\quad& A_3(f_2)=-h^3_{22}f_1,\\
\label{MinimalShapeOps1b}A_4(f_1)=-h^4_{11}f_2, &\quad& A_4(f_2)=-h^4_{22}f_1.
\end{eqnarray}
\end{subequations}
Moreover, by combining \eqref{GaussianCurvature}  and \eqref{Minimal2ndFundForm1ALL} with the Gauss equation \eqref{MinkGaussEquation}, we see that the Gaussian curvature of $M$ takes the form 
\begin{eqnarray}
\label{MinimalK}K&=&h^3_{22}h^4_{11}+h^3_{11}h^4_{22}+1
\end{eqnarray}
and  the normal curvature of $M$ becomes
\begin{eqnarray}
\label{MinimalKD}K^D&=&h^3_{22}h^4_{11}-h^3_{11}h^4_{22}
\end{eqnarray}
because of the Ricci equation \eqref{MinkRicciEquation}, \eqref{NormalCurvature} and \eqref{MinimalShapeOps1ALL}. We would like to state the following lemma that we will use later.
\begin{Lemma}\label{LemmaANONINTLEmma}
Let $M$ be a minimal Lorentzian surface in $\mathbb S^4_2(1) \subset \mathbb E^5_2$. 
Assume that there exists a  null tangent vector field $X$ such that $h(X,X)$ is null. 
Then, $K$ is constant if and only if $K^D$ is constant.
\end{Lemma}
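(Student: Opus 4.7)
The plan is to convert the nullity hypothesis on $h(X,X)$ into a simple vanishing statement in the coefficients of the second fundamental form, and then to observe a factorization of $(K-1)^2-(K^D)^2$ that forces the pointwise identity $|K-1|=|K^D|$ on $M$. Because $M$ is Lorentzian, the null directions in each tangent plane are precisely the lines spanned by $f_1$ and $f_2$ in the adapted frame introduced above, so, after possibly interchanging $f_1$ and $f_2$, I may assume that $X$ is a nonzero multiple of $f_1$. The hypothesis then reads: $h(f_1,f_1)$ is null.

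Using \eqref{Minimal2ndFundForm1a} together with $\langle f_3,f_4\rangle=-1$ and $\langle f_3,f_3\rangle=\langle f_4,f_4\rangle=0$, a direct computation gives
$$\langle h(f_1,f_1),h(f_1,f_1)\rangle=-2h^3_{11}h^4_{11},$$
so the hypothesis is equivalent to $h^3_{11}h^4_{11}\equiv 0$ on $M$. On the other hand, the formulas \eqref{MinimalK} and \eqref{MinimalKD} immediately yield
$$K-1-K^D=2h^3_{11}h^4_{22},\qquad K-1+K^D=2h^3_{22}h^4_{11},$$
and multiplying these gives
$$(K-1)^2-(K^D)^2=4\,h^3_{11}h^3_{22}h^4_{11}h^4_{22}.$$
Combining this with the vanishing of $h^3_{11}h^4_{11}$ produces the pointwise identity $|K-1|=|K^D|$ on $M$.

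From this identity the lemma follows by a short continuity argument. If $K$ is constant, then $(K^D)^2=(K-1)^2$ is constant, so the continuous function $K^D$ takes values in the two-point set $\{+|K-1|,\,-|K-1|\}$; since $M$ is connected, this forces $K^D$ to be constant. The reverse implication is symmetric. There is no integration and no appeal to the Codazzi equation; the entire argument is algebraic, and if there is any step to point to as the main one it is the factorization $(K-1)^2-(K^D)^2=(K-1-K^D)(K-1+K^D)$, which is precisely what turns the nullity hypothesis into the desired relation between $K$ and $K^D$.
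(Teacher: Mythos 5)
Your proof is correct and follows essentially the same route as the paper's: both reduce the nullity hypothesis to $h^3_{11}h^4_{11}\equiv 0$ and then read off the conclusion algebraically from \eqref{MinimalK} and \eqref{MinimalKD}, with no integration or Codazzi input. The only difference is one of packaging --- the paper splits into the cases $h^3_{11}=0$ and $h^4_{11}=0$ to get $K\mp K^D=1$ directly, whereas your factorization $(K-1)^2-(K^D)^2=4\,(h^3_{11}h^4_{11})(h^3_{22}h^4_{22})$ handles both cases at once and, with the continuity/connectedness step, is slightly more careful about the possibility that neither coefficient vanishes identically on all of $M$.
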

\begin{proof}
By replacing indices if necessary, we may assume that $X$ is proportional to $f_1$ which implies either $h^4_{11}=0$ or $h^3_{11}=0$. These two cases imply either $K=h^3_{11}h^4_{22}+1$, $K^D=-h^3_{11}h^4_{22}$ or $K=h^3_{22}h^4_{11}+1$, $K^D=h^3_{22}h^4_{11}$, respectively. Hence, the proof follows.   
\end{proof}

 By a direct computation using the Codazzi equation \eqref{MinkCodazzi} and the Ricci equation \eqref{MinkRicciEquation}, one can obtain the following integrability conditions
\begin{subequations}\label{MinimalIntCond1ALL}
\begin{eqnarray}
\label{MinimalIntCond1a}f_2(h^4_{11})&=&(-\psi_2+2\phi_2)h^4_{11},\\
\label{MinimalIntCond1b}f_2(h^3_{11})&=&(\psi_2+2\phi_2)h^3_{11},\\
\label{MinimalIntCond1c}f_1(h^4_{22})&=&(-\psi_1-2\phi_1)h^4_{22},\\
\label{MinimalIntCond1d}f_1(h^3_{22})&=&(\psi_1-2\phi_1)h^3_{22},\\
\label{MinimalIntCond1e}K^D=h^3_{22}h^4_{11}-h^3_{11}h^4_{22}&=&f_1(\psi_2)-f_2(\psi_1)+\phi_1\psi_2+\phi_1\psi_1.
\end{eqnarray}
\end{subequations}


We will use the following lemma which directly follows from \eqref{MinimalIntCond1ALL}.
\begin{Lemma}\label{LemmaFLAT}
Let $M$ be a flat minimal Lorentzian surface in $\mathbb S^4_2(1) \subset \mathbb E^5_2$. If the normal curvature $K^D$ is constant, then it must be zero.  
\end{Lemma}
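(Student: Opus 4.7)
To lighten notation set $a=h^3_{11}$, $b=h^3_{22}$, $c=h^4_{11}$, $d=h^4_{22}$. Flatness of $M$ together with \eqref{MinimalK} gives $bc+ad=-1$, while the constancy of $K^D$ together with \eqref{MinimalKD} says $bc-ad$ is constant; hence $bc$ and $ad$ are individually constant. I argue by contradiction, assuming $K^D\neq 0$.

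In the generic subcase $abcd\neq 0$, I combine the four Codazzi equations \eqref{MinimalIntCond1a}--\eqref{MinimalIntCond1d} with the four derived relations $f_i(ad)=f_i(bc)=0$, $i=1,2$, to express the remaining four partials. A short computation gives
\[
f_i\log a=\psi_i+2\phi_i,\qquad f_i\log b=\psi_i-2\phi_i\qquad(i=1,2),
\]
with analogous formulas for $\log c$ and $\log d$. From \eqref{MinimalLeviCivita1a} the frame commutator is $[f_1,f_2]=-\phi_1 f_2-\phi_2 f_1$. Applying $[f_1,f_2]$ separately to $\log a$ and to $\log b$ produces two scalar identities; both involve $f_1\psi_2-f_2\psi_1$, $f_1\phi_2-f_2\phi_1$, the bilinears $\phi_i\psi_j$ and $\phi_1\phi_2$, but with $f_i\phi_j$-coefficients of opposite sign. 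Adding the two identities cancels the $f_i\phi_j$-terms as well as the $\phi_1\phi_2$-terms and leaves
\[
f_1\psi_2-f_2\psi_1=-(\phi_1\psi_2+\phi_2\psi_1).
\]
Substituting into the Ricci-normal identity \eqref{MinimalIntCond1e} yields $K^D=0$, contradicting $K^D\neq 0$.

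The degenerate subcases are $bc=0$ (so $ad=-1$ and $K^D=1$) and $ad=0$ (so $bc=-1$ and $K^D=-1$); by the Lemma's symmetry I may assume $b\equiv 0$ in the first and $a\equiv 0$ in the second. Only one of the two commutator identities above is now available, say the one for $\log a$ when $b\equiv 0$. To close the argument I would combine this single identity with the Gauss-equation consequence of $K=0$ --- a direct computation from \eqref{MinimalLeviCivita1a} yields $f_1\phi_2-f_2\phi_1=-2\phi_1\phi_2$ --- which substituted into the commutator identity again forces $f_1\psi_2-f_2\psi_1=-(\phi_1\psi_2+\phi_2\psi_1)$, so \eqref{MinimalIntCond1e} delivers $K^D=0$ in contradiction to $K^D=\pm 1$.

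The main technical nuisance is the sign bookkeeping when one derives the logarithmic derivatives of $a,b,c,d$ from Codazzi and the constancy of $bc,ad$; once these are in hand, the two bracket computations are routine, and the whole proof essentially collapses to the displayed identity $f_1\psi_2-f_2\psi_1=-(\phi_1\psi_2+\phi_2\psi_1)$. I do not foresee a deeper obstacle, though care is needed in the degenerate subcases, where one must notice that the missing commutator identity is exactly replaced by the Gauss relation $f_1\phi_2-f_2\phi_1=-2\phi_1\phi_2$.
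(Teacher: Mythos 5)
Your proof is correct. It rests on the same underlying mechanism as the paper's: Codazzi plus the constancy of the products $h^3_{22}h^4_{11}$ and $h^3_{11}h^4_{22}$ force the normal connection form $\psi$ to be closed (modulo exact terms), after which the Ricci identity \eqref{MinimalIntCond1e} kills $K^D$. The difference is one of gauge. The paper first uses flatness to normalize $m\equiv 1$, so that $\phi_1=\phi_2=0$ and the whole argument collapses to the symmetry of mixed partials of $\ln|h^4_{11}|$, handled uniformly under the single assumption $h^4_{11}h^3_{22}\neq 0$ (no separate degenerate case is needed, since only that one product enters). You instead keep general $\phi_i$, which costs you the two commutator identities, the Gauss relation $f_1\phi_2-f_2\phi_1=-2\phi_1\phi_2$, and an explicit case split; the payoff is that your generic subcase in fact never uses $K=0$, so it simultaneously reproves Case I of Proposition \ref{PropKKDConst}. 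Two small remarks: your computation implicitly corrects what appears to be a typo in \eqref{MinimalIntCond1e} (the last term should be $\phi_2\psi_1$, not $\phi_1\psi_1$, as the Ricci equation with $[f_1,f_2]=-\phi_1f_2-\phi_2f_1$ shows), and in the degenerate subcase the pointwise condition $bc\equiv 0$ only gives $b\equiv 0$ or $c\equiv 0$ after restricting to a suitable open subset --- harmless here since $K^D$ is a global constant, but worth a sentence.
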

\begin{proof}
Since $K=0$, by re-defining $u,v$ necessarily, we may assume $m=1$ which implies $f_1=\partial_u$, $f_2=\partial_v$ and $\phi_1=\phi_2=0.$ Thus, \eqref{MinimalIntCond1e} becomes
\begin{eqnarray}
\label{MinimalIntCond1e2}K^D=(\psi_2)_u-(\psi_1)_v.
\end{eqnarray}

Now, we assume $K^D$ is a non-zero constant. Note that if $h^4_{11}h^3_{22}=h^3_{11}h^4_{22}=0$, then \eqref{MinimalKD} implies $K^D=0$ which is not possible. Therefore, without loss of generality, we may assume $h^4_{11}h^3_{22}\neq0$. In this case, since $K^D$ is constant, \eqref{MinimalK} and \eqref{MinimalKD} imply that $h^4_{11}h^3_{22}=\mbox{const}\neq0$. Therefore, from \eqref{MinimalIntCond1a} and \eqref{MinimalIntCond1d} we get
$$\psi_2=-\Big(\ln|h^4_{11}|\Big)_v\quad\mbox{ and }\quad\psi_1=\Big( \ln|h^3_{22}|\Big)_u=-\Big( \ln|h^4_{11}|\Big)_u,$$
respectively. Hence, these two equations imply $(\psi_1)_v=(\psi_2)_u$. Thus, \eqref{MinimalIntCond1e2} gives $K^D=0$ which yields a contradiction.
\end{proof}

\subsection{The main result}\label{Ssect:MainRes}
In this subsection, we determine  a necessary condition for a minimal Lorentzian surface in 
$\mathbb S^4_2(1) \subset \mathbb E^5_2$ having constant Gaussian and normal curvatures. First, we obtain a necessary condition.
\begin{Prop}\label{PropKKDConst}
Let $M$ be a minimal Lorentzian surface in $\mathbb S^4_2(1) \subset \mathbb E^5_2$. If $K$ and $K^D\neq0$ are constants, then $M$ is congruent to the surface given by
\begin{equation}\label{SurfaceMinimalPosVect}
x(s,t)=\frac{s^2}2\alpha(t)+s\beta(t)+\gamma(t)
\end{equation}
for some smooth $\mathbb E^5_2$-valued maps $\alpha,\ \beta,$ and $\gamma$ such that the induced metric takes the form
\begin{equation}\label{SurfaceMinimalInducedM}
g=-(ds\otimes  dt+dt\otimes  ds)+2\widetilde mdt\otimes dt,
\end{equation}
for a smooth function $\widetilde m$.
\end{Prop}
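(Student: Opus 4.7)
The plan is to first show that, at least locally and after possibly relabeling, one of the null second-fundamental-form coefficients must vanish identically, and then to construct the coordinate system $(s,t)$ directly. Setting $a = h^3_{22}h^4_{11}$ and $b = h^3_{11}h^4_{22}$, the identities \eqref{MinimalK}--\eqref{MinimalKD} give $a+b = K-1$ and $a-b = K^D$, so both $a$ and $b$ are constants under our hypothesis.

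My first goal would be to rule out the case where all four quantities $h^3_{11}, h^4_{11}, h^3_{22}, h^4_{22}$ are simultaneously nonzero on an open set. Differentiating $a$ and $b$ along $f_1$ and using the Codazzi equations \eqref{MinimalIntCond1c}--\eqref{MinimalIntCond1d} yields $f_1(h^4_{11}) = (2\phi_1-\psi_1)h^4_{11}$ and $f_1(h^3_{11}) = (2\phi_1+\psi_1)h^3_{11}$. Combined with the Codazzi relations \eqref{MinimalIntCond1a}--\eqref{MinimalIntCond1b} for $f_2$, this overdetermines the system. Computing the Lie bracket $[f_1,f_2](h^3_{11})$ in two ways, and substituting $f_1(\phi_2)-f_2(\phi_1)+2\phi_1\phi_2 = -K$ (which follows from \eqref{EMsLorSurfgf1f2Gaussian}) together with the Ricci identity \eqref{MinimalIntCond1e}, yields $K^D = 2K$. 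Performing the analogous computation with $h^4_{11}$ in place of $h^3_{11}$ yields $K^D = -2K$. Together these force $K = K^D = 0$, contradicting $K^D \neq 0$. Hence at least one coefficient vanishes identically on an open set; since $a$ and $b$ cannot both vanish (else $K^D = 0$), we may assume after relabeling (swapping $f_1 \leftrightarrow f_2$ and/or $f_3 \leftrightarrow f_4$ if needed) that $h^4_{11}\equiv 0$, while $b = -K^D \neq 0$ forces $h^3_{11}$ and $h^4_{22}$ to be nowhere zero and $h(f_1,f_1) = -h^3_{11}f_4$ to be null.

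Next I would pass to new coordinates by choosing any $T(v)$ with $T'(v) \neq 0$, setting $t = T(v)$, and defining $s = S(u,v)$ to be any antiderivative in $u$ of $m^2/T'(v)$. A direct change-of-variables computation verifies that the induced metric takes the form \eqref{SurfaceMinimalInducedM} with $\widetilde m = S_v/T_v$.

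Finally, using $\partial_s = (T_v/m^2)\partial_u$ together with $\widetilde\nabla_{f_1}f_1 = \phi_1 f_1 + h(f_1,f_1)$ yields the expression $x_{ss} = -(T_v^2 h^3_{11}/m^2)\,f_4$. Differentiating in $u$ and substituting $\partial_u f_4 = -m\psi_1 f_4$ (which simplifies because $h^4_{11}=0$) together with the relation $f_1(h^3_{11}) = (2\phi_1+\psi_1)h^3_{11}$ established above shows that $\partial_u x_{ss} = 0$, so $x_{ss}$ depends only on $t$. Two integrations in $s$ then produce $x(s,t) = \frac{s^2}{2}\alpha(t) + s\beta(t) + \gamma(t)$, as required. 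The main obstacle is the rigidity step: extracting from the combined Gauss--Codazzi--Ricci system the obstruction $K^D = \pm 2K$ that rules out the generic configuration, as the remainder of the argument is a direct verification once the correct null direction has been identified.
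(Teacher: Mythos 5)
Your proposal is correct, and it reaches the conclusion by a genuinely different route at the one step that actually requires an idea. The paper excludes the generic case (all of $h^3_{11},h^4_{11},h^3_{22},h^4_{22}$ nonzero) by adding the logarithmic-derivative relations to get $\phi_i=\tfrac14 f_i(\ln|h^3_{11}h^4_{11}|)$, deducing $(\ln m)_{uv}=0$, hence $K=0$ via \eqref{EMsLorSurfgf1f2Gaussian}, and then invoking the separate Lemma \ref{LemmaFLAT} to force $K^D=0$. You instead impose the commutator (integrability) condition on the overdetermined first-order systems satisfied by $h^3_{11}$ and by $h^4_{11}$; I verified that with $f_1(\phi_2)-f_2(\phi_1)+2\phi_1\phi_2=-K$ and the Ricci identity this does yield $K^D=2K$ and $K^D=-2K$ respectively, hence $K=K^D=0$ as you claim. (Your calculation implicitly corrects what appears to be a typo in \eqref{MinimalIntCond1e}, where the last term should read $\phi_2\psi_1$ rather than $\phi_1\psi_1$.) Your route buys two things: it bypasses Lemma \ref{LemmaFLAT} entirely, and the same commutator computation applied in the surviving case already produces $|K^D|=2|K|$, anticipating Corollary \ref{S421_Corol1}; the paper's route is computationally lighter but needs the auxiliary flat lemma. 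The remainder of your argument --- the coordinate change $t=T(v)$, $S_u=m^2/T'$ giving \eqref{SurfaceMinimalInducedM} with $\widetilde m=S_v/T_v$, and the verification that $\partial_u x_{ss}=0$ using $\partial_u f_4=-m\psi_1 f_4$ and $f_1(h^3_{11})=(2\phi_1+\psi_1)h^3_{11}$ --- is the paper's construction in slightly greater generality (the paper fixes $T(v)=v$). One shared imprecision worth noting: ruling out the configuration where all four coefficients are nonzero only shows that at each point some coefficient vanishes, and the passage to ``one coefficient vanishes identically'' is really a restriction to a component of a dense open set; since the statement is local this is harmless, and the paper's Case II makes the same silent reduction.
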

\begin{proof}
If $K$ and $K^D\neq0$ are constant, then \eqref{MinimalK} and \eqref{MinimalKD} imply 
$h^4_{11}h^3_{22}=\lambda$ and $h^3_{11}h^4_{22}=\nu$ for some constants $\lambda,\nu$. 
Note that if  $\lambda=0$ and $\nu=0$, then \eqref{MinimalKD} implies $K^D=0$ 
which is a contradiction. Therefore, without loss of generality, we may assume $\lambda\neq0$. 
In this case, \eqref{MinimalIntCond1a} and \eqref{MinimalIntCond1d} imply
\begin{eqnarray}
\label{MinimalIntCond2a}f_2(h^3_{22})&=&(\psi_2-2\phi_2)h^3_{22},\\
\label{MinimalIntCond2d}f_1(h^4_{11})&=&(-\psi_1+2\phi_1)h^4_{11},
\end{eqnarray}
respectively. We will study the cases $\nu=0$ and $\nu\neq0$ separately.

\textit{Case I.} $\nu\neq0$. Then,  \eqref{MinimalIntCond1b} and \eqref{MinimalIntCond1c} imply
\begin{eqnarray}
\label{MinimalIntCond2b}f_2(h^4_{22})&=&(-\psi_2-2\phi_2)h^4_{22},\\
\label{MinimalIntCond2c}f_1(h^3_{11})&=&(\psi_1+2\phi_1)h^3_{11},
\end{eqnarray}
respectively. By combining \eqref{MinimalIntCond2d} with \eqref{MinimalIntCond2c}  and  \eqref{MinimalIntCond1a} with  \eqref{MinimalIntCond1b}, we obtain
$$\phi_1=\frac 14 f_1(\ln |h^3_{11}h^4_{11}|)\quad\mbox{ and }\quad\phi_2=\frac 14 f_2(\ln |h^3_{11}h^4_{11}|),$$
respectively. By combining  these equations with \eqref{MinimalLeviCivita1b}, we get
$$-\partial_v(\ln m)=\partial_v(\ln |h^4_{11}h^3_{11}|)\quad\mbox{ and }\quad\partial_u(\ln m)=\partial_u(\ln |h^4_{11}h^3_{11}|).$$
These two equations imply $(\ln m)_{uv}=0$. Therefore, \eqref{EMsLorSurfgf1f2Gaussian} yields $K=0$, i.e., $M$ is flat.  
Hence, Lemma \ref{LemmaFLAT} implies $K^D=0$ which is a contradiction.


\textit{Case II.} $\nu=0$.  By re-arranging $f_1$ and $f_2$ if necessary, we may assume $h^3_{11}=0$. In this case, \eqref{Minimal2ndFundForm1a},  \eqref{MinimalShapeOps1a} and \eqref{MinimalKD} imply
\begin{subequations}\label{MinimalcaseII123}
\begin{eqnarray}
\label{MinimalcaseII1}h(f_1,f_1)&=& -h^4_{11}f_3,\\
\label{MinimalcaseII2}A_3(f_1)=0, &\quad& A_3(f_2)=h^3_{22}f_1,\\
\label{MinimalcaseII3}K^D&=&h^4_{11}h^3_{22}.
\end{eqnarray}
\end{subequations}
Therefore, by combining Weingarten formula \eqref{MEtomWeingarten} with \eqref{MinimalcaseII2}, we obtain $\widetilde\nabla_{f_1}f_3=\psi_1f_3$
 or, equivalently,
\begin{equation}\label{MinimalcaseII4}
\frac\partial{\partial u}f_3=m\psi_1f_3.
\end{equation}
By  using \eqref{MinimalIntCond2d}, \eqref{MinimalcaseII1} and \eqref{MinimalcaseII4}, we get
$$\frac\partial{\partial u}h(f_1,f_1)=2\frac{m_u}m h(f_1,f_1)$$
which implies
\begin{equation}\label{MinimalcaseII5}
h(f_1,f_1)=h^4_{11}f_3=m^2\alpha(v).
\end{equation}
for a  $\mathbb E^5_2$-valued map $\alpha$. Note that if $\alpha'(v)=0$, 
then  $f_3$ is parallel. However, since the codimension of $M$ in $\mathbb S^4_2(1)$ 
is 2, the existence of a parallel normal vector field yields $K^D=0$ which is a contradiction. 
Therefore, we have $\alpha'\neq0$.

Now we define a local coordinate system $(s,t)$  on $M$ by
$$s=s(u,v)=\int_{u_0}^um^2(\xi,v)d\xi,\quad t=v.$$
Then we have
\begin{equation}\label{MinimalcaseII8}
\partial_u=m^2\partial_s\quad\mbox{and}\quad \partial_v=\widetilde m\partial_s+\partial_t
\end{equation}
which give 
$$\langle\partial_s,\partial_s\rangle=0,\quad \langle\partial_s,\partial_t\rangle=-1,\quad \langle\partial_t,\partial_t\rangle=2\widetilde m,$$
where $\widetilde m=\frac{\partial}{\partial v}\left(\int_{u_0}^um^2(\xi,v)d\xi\right)$. Therefore, we obtain \eqref{SurfaceMinimalInducedM}.

By a further computation using \eqref{SurfaceMinimalInducedM}, we obtain $\nabla_{\partial_s}\partial_s=0$. By combining this equation with \eqref{MinimalcaseII5} and \eqref{MinimalcaseII8} we get
\begin{equation}\notag
\widetilde\nabla_{\partial_s} \partial_s=x_{ss}=\alpha(t).
\end{equation}
By integrating this equation, we obtain \eqref{SurfaceMinimalPosVect} for some  $\mathbb E^5_2$-valued maps $\beta$ and $\gamma$. Hence, we completed the proof.
\end{proof}

Next, we obtain the complete classification of minimal Lorentzian surfaces in  $\mathbb S^4_2(1)$  with constant Gaussian curvature and  non-zero constant normal curvature.

\begin{theorem}\label{S421MainTheorem}
Let $M$ be a Lorentzian surface lying fully in  $\mathbb S^4_2(1)\subset \mathbb E^5_2$. Then, $M$ is minimal in $\mathbb S^4_2(1)$ with the constant Gaussian curvature $K$ and  non-zero constant normal curvature $K^D$ if and only if it is congruent to the surface given by 
\begin{align}\label{SurfaceMINIMALPOSVECTLAST}
\begin{split}
x(s,t)=&\left(\frac 12 s^2+\frac {27}{40}\langle\alpha'''(t), \alpha'''(t)\rangle\right)\alpha(t)+\frac 32s\alpha'(t)+\frac 32\alpha''(t),
\end{split}
\end{align}
where $\alpha$ is a null curve in the light cone $\mathcal {LC}$ of $\mathbb E^5_2$ satisfying
\begin{equation}\label{SurfaceTAUCOND1}
\langle\alpha''(t), \alpha''(t)\rangle=\frac 49.
\end{equation}
\end{theorem}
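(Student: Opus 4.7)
The plan is to begin from Proposition \ref{PropKKDConst}, which already guarantees that $M$ is parametrized as $x(s,t) = \tfrac{s^2}{2}\alpha(t) + s\beta(t) + \gamma(t)$ with the Lorentzian metric \eqref{SurfaceMinimalInducedM}, and then successively constrain $\beta$ and $\gamma$ until only the null curve $\alpha$ remains as free data. First I would expand the three scalar conditions $\langle x, x\rangle = 1$, $\langle x_s, x_s\rangle = 0$ and $\langle x_s, x_t\rangle = -1$ as polynomials in $s$ and match coefficients; this yields that $\alpha,\beta$ are null and mutually orthogonal, $\langle\alpha,\gamma\rangle = \langle\beta,\gamma\rangle = 0$, $\langle\gamma,\gamma\rangle = 1$, $\langle\beta,\gamma'\rangle = -1$, and $\langle\alpha,\beta'\rangle = \langle\alpha,\gamma'\rangle = 0$.

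Next, the minimality condition $\hat H = 0$, combined with the inverse of the metric in the $(s,t)$ coordinates, forces $\hat h(\partial_s,\partial_t) = -\widetilde m\,\alpha$. Translating this through the ambient connection (using the readily-computed Christoffel symbols $\Gamma^s_{st} = -\widetilde m_s$, $\Gamma^t_{st}=0$) produces the vector identity
\[
x_{st} + \widetilde m_s\, x_s + \widetilde m\, \alpha - x = 0.
\]
Since $\widetilde m = \tfrac{1}{2}\langle x_t, x_t\rangle$ is a polynomial in $s$ of degree at most four, matching coefficients of each power of $s$ and invoking linear independence of $\alpha, \alpha', \alpha'', \ldots$ yields, in descending degree: $\langle\alpha',\alpha'\rangle = 0$; $\langle\alpha',\beta'\rangle = 0$; $\langle\alpha',\gamma'\rangle + \langle\beta',\beta'\rangle = \tfrac{1}{3}$; the structural identity $\beta = \tfrac{3}{2}\alpha' + 3A_1\alpha$; and $\gamma = \beta' + A_1\beta + A_0\alpha$, where $A_1 := \langle\beta',\gamma'\rangle$ and $A_0 := \tfrac{1}{2}\langle\gamma',\gamma'\rangle$.

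The residual function $A_1$ is pure gauge: the reparametrization $s \mapsto s - 3A_1(t)$ preserves the form \eqref{SurfaceMinimalInducedM} of the metric, sends $\beta$ to $\tfrac{3}{2}\alpha'$, and turns $A_1$ into zero, so $\gamma = \tfrac{3}{2}\alpha'' + A_0\alpha$. Substituting $\beta = \tfrac{3}{2}\alpha'$ into $\langle\beta,\gamma'\rangle = -1$ gives $\langle\alpha',\gamma'\rangle = -\tfrac{2}{3}$; combined with $\langle\alpha',\gamma'\rangle + \langle\beta',\beta'\rangle = \tfrac{1}{3}$ and the direct computation $\langle\beta',\beta'\rangle = \tfrac{9}{4}\langle\alpha'',\alpha''\rangle$ (using the standard consequences $\langle\alpha,\alpha'\rangle = \langle\alpha,\alpha''\rangle = \langle\alpha',\alpha''\rangle = 0$ of $\langle\alpha,\alpha\rangle = 0 = \langle\alpha',\alpha'\rangle$), this yields $\langle\alpha'',\alpha''\rangle = \tfrac{4}{9}$. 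Expanding the self-consistent equation $A_0 = \tfrac{1}{2}\langle\gamma',\gamma'\rangle$ using $\langle\alpha''',\alpha'\rangle = -\langle\alpha'',\alpha''\rangle$ gives $\tfrac{10}{3}A_0 = \tfrac{9}{4}\langle\alpha''',\alpha'''\rangle$, whence $A_0 = \tfrac{27}{40}\langle\alpha''',\alpha'''\rangle$; this establishes \eqref{SurfaceMINIMALPOSVECTLAST} and \eqref{SurfaceTAUCOND1}.

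For sufficiency, the same calculations run backwards: given a null curve $\alpha \subset \mathcal{LC}$ with $\langle\alpha'',\alpha''\rangle = \tfrac{4}{9}$, setting $\beta := \tfrac{3}{2}\alpha'$ and $\gamma := \tfrac{3}{2}\alpha'' + \tfrac{27}{40}\langle\alpha''',\alpha'''\rangle\,\alpha$ satisfies the algebraic relations, produces the Lorentzian metric \eqref{SurfaceMinimalInducedM}, and fulfils the minimality identity. The normal curvature is read off from $K^D = -\langle \hat h(\partial_t,\partial_t), \alpha\rangle$ (which follows because $\alpha \parallel f_3$ in Case II of Proposition \ref{PropKKDConst}): taking the inner product with $\alpha$ kills every term except the $\alpha''''$-contribution from $x_{tt}$, giving $K^D = -\tfrac{3}{2}\langle\alpha'''',\alpha\rangle = -\tfrac{3}{2}\langle\alpha'',\alpha''\rangle = -\tfrac{2}{3}$, and thus $K = K^D + 1 = \tfrac{1}{3}$. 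The main obstacle is the bookkeeping of the polynomial-in-$s$ expansion in the minimality identity: each coefficient is a vector combination of $\alpha, \alpha', \ldots$ entangled with $\beta, \gamma$, and extracting the clean scalar consequences requires care; once that is done, recognizing the $s$-translation as the correct gauge to eliminate the free function $A_1$ is what collapses the general representation to the concise formula in the theorem.
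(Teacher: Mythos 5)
Your proposal is correct and follows essentially the same route as the paper's proof: starting from Proposition \ref{PropKKDConst}, matching powers of $s$ in the metric identities and in the minimality equation $x_{st}+\widetilde m_s x_s+\widetilde m\alpha=x$ to force $K=\tfrac13$, $\beta=\tfrac32\alpha'+3c_1\alpha$, $\gamma=\beta'+c_1\beta+c_2\alpha$, and then eliminating $c_1$ by the $s$-translation gauge. The only (harmless) deviations are bookkeeping ones — you identify $c_1,c_2$ as $\langle\beta',\gamma'\rangle$ and $\tfrac12\langle\gamma',\gamma'\rangle$ and extract $\langle\alpha'',\alpha''\rangle=\tfrac49$ from $\langle x_s,x_t\rangle=-1$ rather than from $\langle\gamma,\gamma\rangle=1$, and in the converse you compute $K^D=-\tfrac23$ directly instead of invoking Lemma \ref{LemmaANONINTLEmma}, which in fact also yields Corollary \ref{S421_Corol1}.
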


\begin{proof}
Assume that $M$ is a minimal Lorentzian surface in $\mathbb S^4_2(1) \subset \mathbb E^5_2$ with the constant Gaussian curvature $K$ and  non-zero constant normal curvature $K^D$. Then, Proposition \ref{PropKKDConst} implies that $M$ is congruent to \eqref{SurfaceMinimalPosVect} for some smooth $\mathbb E^5_2$-valued maps $\alpha,\ \beta,\ \gamma$ such that the induced metric takes the form \eqref{SurfaceMinimalInducedM}. 

Then, by a simple computation using \eqref{SurfaceMinimalInducedM}, we see that the Levi-Civita connection of $M$ satisfies
\begin{subequations}\label{MinimalS421LeviCivita1ALL}
\begin{eqnarray}
\label{MinimalS421LeviCivita1a} \nabla_{\partial_s}\partial_s&=&0,\\
\label{MinimalS421LeviCivita1b} \nabla_{\partial_s}\partial_t= \nabla_{\partial_t}\partial_s&=& -\widetilde m_s\partial_s,\\
\label{MinimalS421LeviCivita1c} \nabla_{\partial_t}\partial_t&=& \widetilde m_s\partial_t+(2\widetilde m\widetilde m_s-\widetilde m_t)\partial_s.
\end{eqnarray}
\end{subequations} 
Further, by using \eqref{GaussianCurvature}, \eqref{SurfaceMinimalInducedM} and \eqref{MinimalS421LeviCivita1ALL}, we obtain the Gaussian curvature of $M$ as 
\begin{eqnarray}\label{MinimalS421GaussCurva}
K=\widetilde m_{ss}.
\end{eqnarray}
Since $K$ is constant, \eqref{MinimalS421GaussCurva} implies
\begin{equation}\label{SurfaceMinimalmfull}
\widetilde m(s,t)=\frac K2s^2+c_1(t)s+c_2(t)
\end{equation}
for some smooth functions $c_1 (t)$ and $c_2(t)$ defined on some open interval in $\mathbb R$. 

Note that because of \eqref{SurfaceMinimalInducedM}, we have $\langle x_s,x_s\rangle=0$ and $\langle x_t,x_t\rangle=2\widetilde m$. Therefore, by a simple computation considering $\langle x,x\rangle=1$ and using \eqref{SurfaceMinimalmfull}, \eqref{SurfaceMinimalPosVect}, we obtain
\begin{subequations}\label{Tau1Tau2Tau3Eqs1ALL}
\begin{eqnarray}
\label{Tau1Tau2Tau3Eqs1a} \langle \alpha,\alpha\rangle= \langle \alpha',\alpha'\rangle=0,\\
\label{Tau1Tau2Tau3Eqs1b} \langle \gamma,\gamma\rangle=1\quad\quad \langle \gamma',\gamma'\rangle=2c_2.
\end{eqnarray}
Therefore, \eqref{Tau1Tau2Tau3Eqs1a} yields that $\alpha$  is a null curve 
 in the light cone $\mathcal {LC}$ of $\mathbb E^5_2$. Also, \eqref{Tau1Tau2Tau3Eqs1a}  implies 
\begin{eqnarray}
\label{Tau1Tau2Tau3Eqs1c} \langle \alpha,\alpha'\rangle= \langle \alpha,\alpha''\rangle=\langle \alpha',\alpha''\rangle=\langle \alpha,\alpha'''\rangle=0,\\
\label{Tau1Tau2Tau3Eqs1d} \langle \alpha'',\alpha''\rangle=-\langle \alpha',\alpha'''\rangle=\langle \alpha,\alpha^{(4)}\rangle.
\end{eqnarray}
\end{subequations}

On the other hand, the tangent vector fields $\widetilde f_1=\frac 1m f_1= \partial_s$ and $\widetilde f_2=m f_1=\widetilde m\partial_s+\partial_t$ form a pseudo orthonormal base field for the tangent bundle of $M$. Because of \eqref{MinimalS421LeviCivita1a}, we have $\nabla_{\widetilde f_1}\widetilde f_1=0$ which implies $\nabla_{\widetilde f_1}\widetilde f_2=0$. Therefore, considering \eqref{SurfaceMinimalPosVect},  the second fundamental form $h$ of $M$ in $\mathbb E^5_2$ satisfies
\begin{align}\label{SurfaceMinimalLeviCivita2b} 
\begin{split}
\widetilde\nabla_{\widetilde f_1} \widetilde f_2=& h(\widetilde f_1,\widetilde f_2)=h(f_1, f_2)\\
=&\widetilde m_sx_s+\widetilde mx_{ss}+ x_{ts}\\
=&3K\frac{s^2}2\alpha+s(2c_1(t)\alpha+K\beta+\alpha')+(c_2(t)\alpha+c_1(t)\beta+\beta').
\end{split}
\end{align}
Since $M$ is minimal, we have  \eqref{Minimal2ndFundForm1b}. By combining \eqref{Minimal2ndFundForm1b} and \eqref{SurfaceMinimalPosVect}  with \eqref{SurfaceMinimalLeviCivita2b}, we obtain
$$\frac{3Ks^2}2\alpha+s(2c_1(t)\alpha+K\beta+\alpha')+c_2(t)\alpha+c_1(t)\beta+\beta'=\frac{s^2}2\alpha+s\beta+\gamma$$
which gives
\begin{subequations}\label{SurfaceMinimalLastEqs1ALL}
\begin{eqnarray}
\label{SurfaceMinimalLastEqs1a} \alpha&=&3K\alpha\\
\label{SurfaceMinimalLastEqs1b} \beta&=&2c_1\alpha+K\beta+\alpha'\\
\label{SurfaceMinimalLastEqs1c} \gamma&=&c_2\alpha+c_1\beta+\beta'
\end{eqnarray}
\end{subequations}
Since $\alpha$ is non-zero, \eqref{SurfaceMinimalLastEqs1a}  implies $K=\frac 13$.  Therefore, \eqref{SurfaceMinimalLastEqs1b} becomes
\begin{equation}
\label{SurfaceMinimalLastEqs1b2} \beta=3c_1\alpha+\frac 32\alpha'.
\end{equation}

By combining \eqref{SurfaceMinimalLastEqs1b2} and \eqref{SurfaceMinimalLastEqs1c}, we get 
\begin{align}\label{SurfaceMinimalLastEqs1c2} 
\begin{split}
\gamma
=&\left(c_2+3c_1^2+3c_1'\right)\alpha+\frac 92c_1\alpha'+\frac 32\alpha''
\end{split}
\end{align}
which implies 
\begin{align}\label{SurfaceMinimalLastEqs1c3} 
\begin{split}
\gamma'=&\left(c_2'+6c_1c_1'+3c_1''\right)\alpha+\left(\frac {15}2c_1'+c_2+3c_1^2\right)\alpha'+\frac 92c_1\alpha''+\frac 32\alpha'''
\end{split}
\end{align}
By considering \eqref{Tau1Tau2Tau3Eqs1ALL}, from \eqref{SurfaceMinimalLastEqs1c2}, we obtain
$$1=\langle\gamma,\gamma\rangle=\frac 94\langle\alpha'',\alpha''\rangle $$
which gives \eqref{SurfaceTAUCOND1}.

On the other hand, by a direct computation using \eqref{Tau1Tau2Tau3Eqs1ALL} and \eqref{SurfaceMinimalLastEqs1c3}, we obtain
\begin{align}\nonumber
\begin{split}
2c_2
&=-10c_1'-\frac 43c_2+5c_1^2+\frac 94\langle\alpha''',\alpha'''\rangle
\end{split}
\end{align}
which gives
\begin{equation}\label{SurfaceMinimalLastEqs3a} 
c_2=-3c_1'+\frac{3}2c_1^2+\frac {27}{40}\langle\alpha''',\alpha'''\rangle
\end{equation}
By using \eqref{SurfaceMinimalLastEqs3a} in  \eqref{SurfaceMinimalLastEqs1c2}, we get 
\begin{equation}\label{SurfaceMinimalLastEqs4a} 
\gamma=\left(\frac{9}{2}c_1^2+\frac {27}{40}\langle\alpha''', \alpha'''\rangle\right)\alpha+\frac 92c_1\alpha'+\frac 32\alpha''
\end{equation}
By combining  \eqref{SurfaceMinimalPosVect},  \eqref{SurfaceMinimalLastEqs1b2} and  \eqref{SurfaceMinimalLastEqs4a} we get 
\begin{align}\label{SurfaceMINIMALPOSVECTLAST1}
\begin{split}
x(s,t)=&\left(\frac 12 (s+3c_1)^2+\frac {27}{40}\langle\alpha''',\alpha'''\rangle\right)\alpha+\left(\frac 32(s+3c_1)\right)\alpha'+\frac 32\alpha''.
\end{split}
\end{align}
From the parametrization that we obtain for $M$ in \eqref{SurfaceMINIMALPOSVECTLAST1}, we see that, without loss of generality,  we  may choose $c_1=0$ by re-defining $s$ properly. Hence, we have \eqref{SurfaceMINIMALPOSVECTLAST} which proves the necessary condition.

Conversely, assume that $M$ is   given by \eqref{SurfaceMINIMALPOSVECTLAST} for a curve $\alpha$ described in the theorem.
Then, we have \eqref{Tau1Tau2Tau3Eqs1a} and \eqref{Tau1Tau2Tau3Eqs1c}. By a simple computation, we see that the induced metric $g$ of $M$ satisfies \eqref{SurfaceMinimalInducedM} for the smooth function
$$\widetilde m=\frac 16 s^2+\frac{27}{40}\langle\alpha''' (t), \alpha'''(t)\rangle,$$
which yields that $M$ has constant Gaussian curvature because of \eqref{MinimalS421GaussCurva}. Furthermore, by considering \eqref{Tau1Tau2Tau3Eqs1a} and \eqref{Tau1Tau2Tau3Eqs1c}, from \eqref{SurfaceMINIMALPOSVECTLAST} we get $\langle x,x\rangle=1 $, i.e., $M$ lies in $\mathbb S^4_2(1) \subset \mathbb E^5_2$. 

On the other hand,  $\widetilde f_1=\partial_s$ and $\widetilde f_2=\widetilde m\partial_s+\partial_t$ satisfies $\nabla_{\widetilde f_1}\widetilde f_1=\nabla_{\widetilde f_1}\widetilde f_2=0$ as described while proving the necessary condition. Therefore, we have
$$h(\widetilde f_1,\widetilde f_2)=\widetilde \nabla_{\widetilde f_1}\widetilde f_2=\widetilde m_sx_s+\widetilde mx_{ss}+x_{ts}.$$
By a simple computation, we see that the right-hand side of the above equation is $x$. Hence, $M$ is minimal in $\mathbb S^4_2(1).$

Finally, we have $\widetilde\nabla_{\widetilde f_1}\widetilde f_1=h(\widetilde f_1,\widetilde f_1)=x_{ss}=\alpha(t)$. 
Therefore, for the null tangent vector field $X=\widetilde f_1$ we have $h(X,X)$ is null. 
Since $K$ is constant and $M$ is minimal in $\mathbb S^4_2(1)$, Lemma \ref{LemmaANONINTLEmma} 
implies that $K^D$ is constant which completes the proof. 
\end{proof}

\subsection{Conclusions}\label{Ssect:Exxpplli}
In this subsection, we  investigate some special cases and give some explicit examples.

Let $M$ be the minimal surface given by \eqref{SurfaceMINIMALPOSVECTLAST} for a  null curve $\alpha$ lying in the light cone $\mathcal {LC}$ of $\mathbb E^5_2$ satisfying
\eqref{SurfaceTAUCOND1}. We consider the pseudo-orthonormal frame field $\{\widetilde f_1,\widetilde f_2;f_3,f_4\}$, where $\widetilde f_1$ and $\widetilde f_2$ are tangent vector fields described in the proof   Theorem \ref{S421MainTheorem} and 
\begin{align}\nonumber
\begin{split}
f_3=&\alpha(t) ,\\
f_4=&\frac{1}{2400} \Big(-100 s^4-162 \left(5 s^2 \eta +10 s \eta '+81 \eta ^2\right)+6075 \xi\Big)\alpha(t)\\
&+\frac{1}{160} \Big(-40 s^3-270 s \eta -567 \eta' \Big)\alpha'(t)-\frac{3}{20} \Big(5 s^2+27 \eta\Big) \alpha''(t)\\
&-\frac{3s}{4}\alpha'''(t)-\frac{9}{4}\alpha^{(4)}(t)
\end{split}
\end{align}
for the functions $\eta=\langle\alpha'''(t), \alpha''' (t) \rangle$ and $\xi=\langle\alpha^{(4)}(t), \alpha^{(4)} (t)\rangle.$
By a direct computation, we obtain the Levi-Civita connection of $M$ as
\begin{equation}\label{SurfaceMINIMALPOSVECTLASTFrstFuncdemental}
\nabla_{\widetilde f_1}\widetilde f_1=\nabla_{\widetilde f_1}\widetilde f_2=0,\quad \nabla_{\widetilde f_2}\widetilde f_1=-\frac s3 \widetilde f_1,\quad \nabla_{\widetilde f_2}\widetilde f_2=\frac s3 \widetilde f_2
\end{equation}
and the second fundemental form of $M$ as 
\begin{equation}\label{SurfaceMINIMALPOSVECTLASTSecondFuncdemental}
h(\widetilde f_1,\widetilde f_1)=f_3,\quad h(\widetilde f_1,\widetilde f_2)=x,\quad h(\widetilde f_2,\widetilde f_2)= \left(\NecCOND\right)f_3-\frac 23 f_4.
\end{equation}
In addition, the normal connection of $M$  satisfies
\begin{equation}\label{SurfaceMINIMALPOSVECTLASTNrmlForm}
D_{\widetilde f_1}f_3=D_{\widetilde f_1}f_4=0,\quad D_{\widetilde f_2}f_3=-\frac {2s}3f_3,\quad  D_{\widetilde f_2}f_4=\frac {2s}3f_4.
\end{equation}

Therefore, we have 
\begin{Corol}\label{S421_Corol1}
Let $M$ be an oriented minimal Lorentzian surface in $\mathbb S^4_2(1) \subset \mathbb E^5_2$ with the Gaussian curvature $K$ and normal curvature $K^D$. If $K$ and $K^D\neq 0$ are constant, then $K=\frac 13$ and $|K^D|=\frac 23$.
\end{Corol}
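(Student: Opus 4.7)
The bulk of the work for this corollary has already been carried out in Theorem~\ref{S421MainTheorem} and in the explicit frame computation preceding the statement, so the plan is simply to harvest the curvature data from them. Theorem~\ref{S421MainTheorem} guarantees that any oriented minimal Lorentzian surface $M$ in $\mathbb S^4_2(1)$ with $K$ and $K^D\neq 0$ constant is congruent to the surface \eqref{SurfaceMINIMALPOSVECTLAST} built from a null curve $\alpha$ in $\mathcal{LC}$ satisfying \eqref{SurfaceTAUCOND1}. The equality $K=\tfrac13$ is already visible inside that proof: equation \eqref{SurfaceMinimalLastEqs1a} reads $\alpha = 3K\alpha$, and since $\alpha$ is non-zero this forces $K=\tfrac13$. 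I would simply quote this step.

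For the normal-curvature assertion I would read the components $h^\mu_{ij}$ directly off the second fundamental form \eqref{SurfaceMINIMALPOSVECTLASTSecondFuncdemental} computed in the pseudo-orthonormal frame $\{\widetilde f_1,\widetilde f_2;f_3,f_4\}$. Comparing $h(\widetilde f_1,\widetilde f_1)=f_3$ with the expansion \eqref{Minimal2ndFundForm1a} gives $h^4_{11}=-1$ and $h^3_{11}=0$, while the identity $h(\widetilde f_2,\widetilde f_2)=\left(\NecCOND\right)f_3-\tfrac23 f_4$ combined with \eqref{Minimal2ndFundForm1c} gives $h^3_{22}=\tfrac23$ together with some value of $h^4_{22}$ whose precise form is irrelevant since it is multiplied by $h^3_{11}=0$. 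Plugging into the Ricci-type formula \eqref{MinimalKD} then yields
\[
K^D \;=\; h^3_{22}h^4_{11} - h^3_{11}h^4_{22} \;=\; \tfrac 23\cdot(-1) - 0 \;=\; -\tfrac 23,
\]
so $|K^D|=\tfrac 23$.

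There is no real obstacle beyond the one already surmounted in Theorem~\ref{S421MainTheorem}; the only point to watch is a sign convention, namely matching the minus signs in the expansions \eqref{Minimal2ndFundForm1a}--\eqref{Minimal2ndFundForm1c} with the orientation of the normal null frame $\{f_3,f_4\}$ fixed in the Conclusions subsection. The sign of $K^D$ ultimately depends on the ordering of $f_3$ and $f_4$, which is precisely why only the absolute value is recorded in the statement. As a sanity check, the Lorentzian Veronese surface mentioned in the introduction should realise exactly the values $K=\tfrac13$ and $|K^D|=\tfrac23$.
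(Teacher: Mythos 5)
Your proposal is correct and follows essentially the same route as the paper: the paper also deduces the corollary directly from Theorem \ref{S421MainTheorem} together with the explicit frame computations \eqref{SurfaceMINIMALPOSVECTLASTFrstFuncdemental}--\eqref{SurfaceMINIMALPOSVECTLASTNrmlForm}, with $K=\tfrac13$ coming from \eqref{SurfaceMinimalLastEqs1a} and $K^D=-\tfrac23$ read off from the second fundamental form (equivalently from $\psi_2=-\tfrac{2s}{3}$ in \eqref{SurfaceMINIMALPOSVECTLASTNrmlForm} via \eqref{MinimalIntCond1e}). Your sign bookkeeping with the expansions \eqref{Minimal2ndFundForm1a}--\eqref{Minimal2ndFundForm1c} and formula \eqref{MinimalKD} checks out, and your remark about the dependence of the sign of $K^D$ on the ordering of $f_3,f_4$ is the right explanation for why only $|K^D|$ appears in the statement.
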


On the other hand, by combining \eqref{SurfaceMINIMALPOSVECTLASTFrstFuncdemental}-\eqref{SurfaceMINIMALPOSVECTLASTNrmlForm}, 
we obtain   connection forms of $M$ associated with the frame field $\{\widetilde f_1,\widetilde f_2,f_3,f_4\}$  as
\begin{align}\label{SurfaceMINIMALPOSVECTLASTCONFORMS}
\begin{split}
\omega_3^3=2\omega_1^1=-\frac{2s}3,&\qquad\qquad \omega_1^4=0,\quad \omega_1^3=-\omega_1\\
\omega_2^3=\left(\NecCOND\right)\omega_2,&\qquad\qquad \omega_2^4=-\frac 23\omega_2,
\end{split}
\end{align}
where $\omega_1$ and $\omega_2$ are dual forms defined by $\omega_i(f_j)=\delta_{ij}$.


\begin{Example}\cite{gorokh}
\label{veronesesurf}
Let $(x, y, z)$ be the natural coordinate system of $\mathbb{E}^3_1$ and $(u_1, u_2, u_3, u_4, u_5)$ that of $\mathbb{E}^5_2$. 
The mapping ${\bf x}:\mathbb{S}^2_1\left(\frac{1}{3}\right)\longrightarrow\mathbb{S}^4_2$ of 
the de Sitter space $\mathbb{S}^2_1\left(\frac{1}{3}\right)$ of curvature $1/3$ into the pseudo-sphere $\mathbb{S}^4_2$ defined by 
\begin{align}
\nonumber
\begin{split}
u_1&=\frac{1}{6}(x^2+y^2+2z^2),\;\; u_2=\frac{1}{2\sqrt{3}}(x^2-y^2),\;\; u_3=\frac{1}{\sqrt{3}}xy,\\
u_4&=\frac{1}{\sqrt{3}}xz,\;\; u_5=\frac{1}{\sqrt{3}}yz
\end{split}
\end{align}
is an isometric immersion of $\mathbb{S}^2_1\left(\frac{1}{3}\right)$ which is called the \textit{Lorentzian Veronese surface}. A  parametrization of the Lorentzian Veronese surface $M_1$ is given as
\begin{align}
\label{lorentzveronese}
{\bf x}(u,v)= &\Bigg(\frac{3}{2}\cosh^2\left(\frac{u}{\sqrt{3}}\right)-1, 
\frac{\sqrt{3}}{2}\cosh^2\left(\frac{u}{\sqrt{3}}\right)\cos\left(\frac{2v}{\sqrt{3}}\right), 
\frac{\sqrt{3}}{2}\cosh^2\left(\frac{u}{\sqrt{3}}\right)\sin\left(\frac{2v}{\sqrt{3}}\right),\notag\\
&\frac{\sqrt{3}}{2}\sinh\left(\frac{2u}{\sqrt{3}}\right)\cos\left(\frac{v}{\sqrt{3}}\right),
\frac{\sqrt{3}}{2}\sinh\left(\frac{2u}{\sqrt{3}}\right)\sin\left(\frac{v}{\sqrt{3}}\right)\Bigg).
\end{align} 
It can be proved that this surface is minimal in $\mathbb S^4_2(1)$. Moreover, it has constant normal curvature $K^D=-\frac{2}{3}$ and constant Gaussian curvature $K=\frac{1}{3}$. 
\end{Example}


\begin{Prop}\label{S421_Corol2}
Let $M$ be the surface given by \eqref{SurfaceMINIMALPOSVECTLAST} for a null curve $\alpha (t)$ 
 in the light cone $\mathcal {LC}$ of $\mathbb E^5_2$ satisfying \eqref{SurfaceTAUCOND1}. If $\alpha$ satisfies
\begin{equation}\label{VeroneceCongEq}
\NecCOND=0,
\end{equation}
 where $\eta=\langle\alpha'''(t), \alpha'''(t) \rangle$ and $\xi=\langle\alpha^{(4)}(t),\alpha^{(4)}(t)\rangle$, 
then $M$ is congruent to the Lorentzian Veronese surface.
\end{Prop}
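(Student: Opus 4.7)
The strategy is to use the structural condition \eqref{VeroneceCongEq} to collapse the second fundamental form of $M$ to a parallel one, and then identify $M$ with the Lorentzian Veronese surface of Example~\ref{veronesesurf}.

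First, I would substitute \eqref{VeroneceCongEq} into \eqref{SurfaceMINIMALPOSVECTLASTSecondFuncdemental}. This eliminates the $f_3$-component of $h(\widetilde f_2,\widetilde f_2)$. Passing to the second fundamental form $\hat h$ of $M$ in the sphere via $\hat h(X,Y)=h(X,Y)+\langle X,Y\rangle x$, one obtains
\begin{equation*}
\hat h(\widetilde f_1,\widetilde f_1)=f_3,\qquad \hat h(\widetilde f_1,\widetilde f_2)=0,\qquad \hat h(\widetilde f_2,\widetilde f_2)=-\tfrac{2}{3}f_4.
\end{equation*}
From this one reads off $h^{4}_{11}=-1$, $h^{3}_{22}=2/3$, $h^3_{11}=h^4_{22}=0$, so in particular $K^D=-2/3$ by \eqref{MinimalKD}, matching the Veronese surface.

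Next, I would verify directly that $\hat\nabla\hat h\equiv 0$ using the tangential connection \eqref{SurfaceMINIMALPOSVECTLASTFrstFuncdemental} and the normal connection \eqref{SurfaceMINIMALPOSVECTLASTNrmlForm}. The components involving $\widetilde f_1$ as the differentiating direction are zero because both $\nabla_{\widetilde f_1}\widetilde f_i=0$ and $D_{\widetilde f_1}f_\mu=0$; the two non-trivial components reduce to cancellations of the form $D_{\widetilde f_2}f_\mu=-\tfrac{2s}{3}f_\mu$ against $-2\hat h(\nabla_{\widetilde f_2}\widetilde f_i,\widetilde f_i)$, which is exactly the identity $\omega_3^3=2\omega_1^1$ recorded in \eqref{SurfaceMINIMALPOSVECTLASTCONFORMS}. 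Hence $M$ is a parallel, minimal Lorentzian surface lying fully in $\mathbb S^4_2(1)$ with $K=1/3$ and $K^D=-2/3$.

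To conclude, I would carry out the analogous frame/connection/second-fundamental-form computation for the Lorentzian Veronese surface $M_1$ from the parametrization \eqref{lorentzveronese}: choose null tangent vectors with inner product $-1$, complete them to a pseudo-orthonormal frame with a null normal pair, and check that $M_1$'s first fundamental form, $\hat h$, and normal connection in this frame coincide with those computed for $M$. The fundamental theorem for non-degenerate pseudo-Riemannian submanifolds of $\mathbb E^5_2$ then yields congruence of $M$ with $M_1$. Alternatively, one may appeal to the $\mathbb S^4_2(1)$-analogue of Chen's uniqueness theorem for parallel minimal Lorentzian surfaces cited in the introduction.

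The main obstacle I foresee is this last congruence step: arranging the null normal basis on $M_1$ so that it exactly matches the null normal basis on $M$ (only $|K^D|=2/3$ is determined a priori, so a possible interchange $f_3\leftrightarrow f_4$ may be required, together with sign normalizations of $\alpha(t)$ versus the chosen $f_3$). Once this alignment is in place, the rest of the proof is a routine verification.
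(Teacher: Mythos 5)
Your primary route is sound and, at bottom, coincides with the paper's: both arguments come down to exhibiting pseudo-orthonormal frames on $M$ and on the Lorentzian Veronese surface $M_1$ in which the metric, second fundamental form and normal connection agree, and then invoking the fundamental (congruence) theorem for submanifolds of $\mathbb E^5_2$. The paper does this by computing the connection forms \eqref{VerSurfaceMINIMALPOSVECTLASTCONFORMS} of $M_1$ in the frame $\{\check f_1,\check f_2,\check f_3,\check f_4\}$ built from \eqref{veroneseframe} and observing that \eqref{VeroneceCongEq} is exactly the condition making \eqref{SurfaceMINIMALPOSVECTLASTCONFORMS} match them (the sole difference being the $\check\omega_2^3=0$ entry); your proposal reaches the same comparison but adds a genuine observation the paper does not make explicit, namely that \eqref{VeroneceCongEq} is precisely the condition for $\hat h$ to be parallel --- your verification of $\hat\nabla\hat h\equiv 0$ from \eqref{SurfaceMINIMALPOSVECTLASTFrstFuncdemental} and \eqref{SurfaceMINIMALPOSVECTLASTNrmlForm} checks out, as do the values $h^4_{11}=-1$, $h^3_{22}=2/3$, $K^D=-2/3$. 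This conceptual framing is worth having. One caveat: your proposed shortcut via ``the $\mathbb S^4_2(1)$-analogue of Chen's uniqueness theorem for parallel minimal Lorentzian surfaces'' is not actually available from the references in the introduction, which only cover the Riemannian sphere $\mathbb S^4(1)$ and the pseudo-hyperbolic space $\mathbb H^4_2(-1)$; if you take that route you must either prove the $\mathbb S^4_2(1)$ version or locate a citation, so in practice you are thrown back on the explicit frame computation on $M_1$ --- which is exactly the content of the paper's proof, including the frame normalization (the function $\zeta$) needed to align the null normal directions, the issue you correctly flag as the main obstacle.
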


\begin{proof}
Let  $M_1$ be  Lorentzian Veronese surface given by \eqref{lorentzveronese} and $M$ 
a surface described in Theorem \ref{S421MainTheorem} for a curve $\alpha$. 
With the notation described in Sec. \ref{Ssect:ConnnFORMs}, we  consider the  orthonormal frame field  $\{e_1,e_2;e_3,e_4\}$ given by 
\begin{equation}
\label{veroneseframe}
e_1=\frac{\partial}{\partial u},\;\; e_2=\mbox{sech}\left(\frac{u}{\sqrt{3}}\right)\frac{\partial}{\partial v},\;\; e_3={\sqrt{3}}\hat{h}(e_1, e_1),\;\;
e_4={\sqrt{3}}\hat{h}(e_1,e_2)
\end{equation}
satisfying $\varepsilon_1=\langle e_1, e_1\rangle=-1$, $\varepsilon_2=\langle e_2, e_2\rangle=1$, $\varepsilon_3=\langle e_3, e_3\rangle=1$ and $\varepsilon_4=\langle e_4, e_4\rangle=-1$.
We put 
\begin{equation}
\nonumber
\check{f_1}=\zeta (e_1-e_2),\quad \check{f_2}=\frac1{2\zeta} (e_1+e_2),\quad \check{f_3}=\frac{2\sqrt3\zeta^2}{3}(e_3-e_4),\quad \check{f_4}=-\frac{\sqrt3}{4\zeta^2}(e_3+e_4),
\end{equation}
where $\zeta$ is a non-vanishing function satisfying
$$
e_1(\zeta)-e_2(\zeta)=-\frac{\sqrt 3\zeta}{3}\tanh\left(\frac{u}{\sqrt{3}}\right).
$$
Then the null vector fields $\check{f_1},\check{f_2},\check{f_3},\check{f_4}$ 
form a pseudo-orthonormal frame field for $M$. Furthermore, by a direct computation, we see 
that connection forms corresponding to this frame field satisfy 
\begin{align}\label{VerSurfaceMINIMALPOSVECTLASTCONFORMS}
\begin{split}
\check{\omega}_3^3=2\check{\omega}_1^1=-\frac{2\check s}3,&\qquad\qquad \check{\omega}_1^4=0,\quad \check{\omega}_1^3=-\check{\omega}_1\\
\check{\omega}_2^3=0,&\qquad\qquad \check{\omega}_2^4 =-\frac 23\check{\omega}_2\\
\end{split}
\end{align}
for the coordinate function $\check s$ given by
$$\check s=\frac {\sqrt 3}{2\zeta}\tanh\left(\frac{u}{\sqrt{3}}\right)-\frac{3}{2\zeta^2}(e_1(\zeta)+e_2(\zeta)).$$

By comparing \eqref{SurfaceMINIMALPOSVECTLASTCONFORMS} and \eqref{VerSurfaceMINIMALPOSVECTLASTCONFORMS}, we see that 
if $\alpha$ satisfies \eqref{VeroneceCongEq}, then the connection forms of $M_1$ corresponding 
to the frame field $\{\check{f_1}, \check{f_2}, \check{f_3},\check{f_4}\}$  coincides   with that of $M$ corresponding to frame field $\{\widetilde{f_1},\widetilde{f_2},{f_3},{f_4}\}$. Hence, we obtain that  $M$ is congruent to $M_1$ if  \eqref{VeroneceCongEq} is satisfied.
\end{proof}

In the next example, by considering Proposition \ref{S421_Corol2}, we obtain a parametrization of a Lorentzian surface which is congruent to the Lorentzian Veronese surface.
\begin{Example}
We consider the null curve
\begin{equation}\nonumber
\alpha(t) =\frac{1}{3 \sqrt3}\left(2 \cos   t ,2 \sin   t,\cos   2t,\sin   2t, \sqrt3\right)
\end{equation}
 in the light cone $\mathcal {LC}$ of $\mathbb E^5_2$. The, for this $\alpha$   \eqref{SurfaceMINIMALPOSVECTLAST} gives 
 an  explicit example of minimal surface in $\mathbb S^4_2(1)$ with constant Gaussian and normal curvatures. 
Since $\alpha$ satisfies \eqref{VeroneceCongEq}, the Lorentzian surface   given by 
\begin{align}\label{FirstExampleS421MINIMAL}
\begin{split}
x(s,t)=&\frac 1{6 \sqrt{3}}\Big(2s (s \cos  t -3 \sin  t ),2s (s \sin  t +3 \cos  t ),\left(s^2-9\right) \cos 2t-6 s \sin 2t,\\
&\left(s^2-9\right) \sin 2t+6 s \cos 2t,\sqrt{3} \left(s^2+3\right)\Big).
\end{split}
\end{align}
is congruent to the Lorentzian Veronese surface.
\end{Example}

\begin{Remark}
By considering the definition of the coordinate function $s$ in the proof of Proposition \ref{PropKKDConst}, we would like to conclude that the new paramatrization of the Lorentzian Veronese surface  presented in \eqref{FirstExampleS421MINIMAL} possesses the following interesting property: The parameter curve $x(s_0,t)$ is a null geodesic of the Lorentzian Veronese surface for any constant $s_0$.
\end{Remark}


In the following example, we obtain a minimal surface which is not congruent to Lorentzian Veronese surface.
\begin{Example}
In this example, we consider the curve
\begin{align}\nonumber
\begin{split}
\alpha_0 (t) =\frac{1}{3 \sqrt3}&\left(\cos 2t \cot t,2 \cos ^2t,\cos t \cot t \cos \left(\sqrt{3} \ln (\tan t+\sec t)\right)\right.,\\
&\left.\cos t \cot t \sin \left(\sqrt{3} \ln (\tan t+\sec t)\right),\cos t\right)
\end{split}
\end{align}
for $0< t < \frac{\pi}{2}$ and the surface $M$ given by \eqref{SurfaceMINIMALPOSVECTLAST} for $\alpha=\alpha_0$. By a direct computation, we obtain
\begin{align}\nonumber
\begin{split}
h(\widetilde f_1, \widetilde f_1)=&f_3,\\
h(\widetilde f_1, \widetilde f_2)=&x,\\
h(\widetilde f_2, \widetilde f_2)=&\left(\frac{21}{800} \left(-180 \cos 2 t+45 \cos 4 t-121\right) \csc ^4t \sec ^4t\right)f_3-\frac 23f_4,
\end{split}
\end{align}
where  $\widetilde f_1,\widetilde f_2$ are the tangent vector fields described above and $f_3=\alpha_0 (t)$. Thus, $M$ is a minimal surface in $\mathbb S^4_2(1)$ with constant Gaussian and normal curvatures. 
\end{Example}

\section*{Acknowledgments}
This work is obtained during the  T\"UB\.ITAK 1001 project \emph{Y\_EUCL2TIP} (Project Number: 114F199).

\newpage
\noindent U\v gur Dursun\\
Department of Mathematics\\
Isik University Sile Campus\\ 
034980, Sile, Istanbul, Turkey\\
E-mail address: udursun@isikun.edu.tr

\vskip 5mm

 \noindent Nurettin Cenk Turgay \\
Department of Mathematics \\
Istanbul Technical University \\
34469 Maslak, Istanbul, Turkey \\
E-mail address: turgayn@itu.edu.tr


\begin{thebibliography}{75}



\newcommand{\NEWBIB}[8]
{
\ifthenelse{\equal{#1}{arXiv}}{\bibitem{#2} #3,  \emph{#4},  #1:#5.}{}
\ifthenelse{\equal{#1}{paper}}{\bibitem{#2} #3,  \emph{#4}, #5, \textbf{#6} (#7), #8.}{}
\ifthenelse{\equal{#1}{book}}{\bibitem{#2} #3,  \emph{#4}, #5, #6, #7.}{}
}
\newcommand{\CHENNAME}{B.-Y. Chen}




\NEWBIB{paper}{Chen1972}{\CHENNAME}{Minimal surfaces with constant Gauss curvature} {Proc. Amer. Math. Soc.}{34}{1972} {504--508}

\NEWBIB{paper}{ChenFlatQuasiE42HASADDENDUM}{\CHENNAME}{Classification of marginally trapped Lorentzian flat surfaces in $\mathbb E^4_2$ and its application to biharmonic surfaces} {J. Math. Anal. Appl.}{340}{2008}{861--875}

\NEWBIB{paper}{ChenDep2009}{\CHENNAME}{Dependence of the Gauss-Codazzi equations and the Ricci equation of Lorentz surface} {Publ. Math. Debrecen}{74}{2009} {341--349}



\NEWBIB{paper}{Chen2010}{\CHENNAME}{A minimal immersion of the hyperbolic plane into the neutral pseudo-hyperbolic 4-space and its characterization} {Arch. Math.}{94}{2010} {291--299}


\NEWBIB{paper}{CHEN2011_SpaceForms}{\CHENNAME}{Classification of minimal Lorentz surfaces in indefinite space forms with arbitrary codimension and arbitrary index} {Publ. Math. Debrecen}{78}{2011}{485--503}

\NEWBIB{paper}{ChenFlatQuasiE42ADDENDUM}{\CHENNAME, D. Yang}{Addendum to "Classification of marginally trapped Lorentzian flat surfaces in $\mathbb E^4_2$  and its application to biharmonic surfaces"} {J. Math. Anal. Appl.}{361}{2010}{280--282}

\NEWBIB{paper}{Cheng2000}{Q.-M. Cheng}{Complete maximal spacelike surfaces in an anti-de Sitter space ${\bf H}^{4}_{2}(c)$} { Glasgow Math. J.}{42}{2000} {139--156}

\NEWBIB{paper}{Dillen01}{F. Dillen}{Minimal immersions of surfaces into spheres} {Arch. Math. (Basel)}{49}{1987} {94--96}

\NEWBIB{paper}{JiHou2007}{F. H. Ji and Z. H. Hou}{On Lorentzian surfaces with $H^2 = K$ in Minkowski 3-space} {J. Math. Anal. Appl.}{334}{2007} { 54--58}

\NEWBIB{paper}{HouYang2010}{Z. H. Hou,  D. Yang} {Classification of Lorentzian surfaces with parallel mean curvature vector in $E^4_2$} {Acta Math. Hungar.}{128}{2010} {59--81}

\NEWBIB{paper}{Kenmotsu01}{K. Kenmotsu}{Minimal surfaces with constant curvature in 4-dimensional space forms} { Proc. Amer. Math. Soc.}{89}{1983} {133--138}




\NEWBIB{paper}{Kenmotsu1983}{K. Kenmotsu}{Minimal surfaces with constant curvature in 4-dimensional space forms } {Proc. Amer. Math. Soc.}{89}{1983} {133--138}

\NEWBIB{paper}{Lumiste1961}{Yu. G.  Lumiste}{On the theory of two-dimensional minimal surfaces, II. Surfaces of constant curvature} {Uchen. Zap. Tartu Univ.}{102}{1961} {16--28}

\NEWBIB{book}{ONeillKitap}{M. P. O'Neill}{Semi-Riemannian geometry with applications to relativity}{World Scientific}{New York}{1983}{}

\NEWBIB{paper}{Pinl1958}{M. Pinl} {Minimalflachen fester Gausschen Krummung} {Math. Annalen.}{136}{1958}{34--40}

\NEWBIB{paper}{gorokh2}{V. P. Gorokh}{Two-dimensional minimal surfaces in a pseudo-Euclidean space}{J. Sov. Math.}{54}{1991}{691--699}

\NEWBIB{paper}{gorokh}{V. P. Gorokh}{Minimal surfaces of constant Gaussian curvature in a pseudo-Riemannian sphere}{J. Sov. Math.}{59}{1992}{709--714}



\NEWBIB{paper}{SAKAKI}{M. Sakaki}{Space-like maximal surfaces in 4-dimensional space forms of index 2} {Tokyo J. Math}{25}{2002} {295--306}


\end{thebibliography}
\end{document}